\newcommand{\rme}{\mathrm{e}}
\newcommand{\rmi}{\mathrm{i}}
\newcommand{\rmd}{\mathrm{d}}
\newcommand{\abs}[1]{\left\lvert#1\right\rvert}
\newcommand{\ZZ}{\mathbb{Z}}
\newcommand{\RR}{\mathbb{R}}
\newcommand{\CC}{\mathbb{C}}
\newcommand{\Acal}{\mathcal{A}}
\newcommand{\Bcal}{\mathcal{B}}
\newcommand{\Ccal}{\mathcal{C}}
\newcommand{\myre}{\text{Re\,}}
\newproof{proof}{Proof}
\newtheorem{thm}{Theorem}
\newdefinition{defn}{Definition}
\newdefinition{rmk}{Remark}
\newdefinition{hp}{Assumption}
\begin{document}
\title{A splitting approach\\ for the magnetic Schr\"odinger equation}
\author{M.~Caliari\corref{cor1}}
\ead{marco.caliari@univr.it}
\address{Dipartimento di Informatica, Universit\`a di Verona, Italy}
\author{A.~Ostermann\corref{}}
\ead{alexander.ostermann@uibk.ac.at}
\author{C.~Piazzola\corref{}}
\ead{chiara.piazzola@uibk.ac.at}
\address{Institut f\"ur Mathematik, Universit\"at Innsbruck, Austria}
\cortext[cor1]{Corresponding author}

\begin{abstract}
The Schr\"odinger equation in the presence of an external electromagnetic field is an important problem in computational quantum mechanics. It also provides a nice example of a differential equation whose flow can be split with benefit into three parts. After presenting a splitting approach for three operators with two of them being unbounded, we exemplarily prove first-order convergence of Lie splitting in this framework. The result is then applied to the magnetic Schr\"odinger equation, which is split into its potential, kinetic and advective parts. The latter requires special treatment in order not to lose the conservation properties of the scheme. We discuss several options. Numerical examples in one, two and three space dimensions show that the method of characteristics coupled with a nonequispaced fast Fourier transform (NFFT) provides a fast and reliable technique for achieving mass conservation at the discrete level.
\end{abstract}

\begin{keyword}
magnetic Schr\"odinger equation \sep exponential splitting methods \sep convergence \sep Fourier techniques \sep nonequispaced fast Fourier transform
\end{keyword}
\maketitle

\section{Introduction}
In quantum mechanics a lot of phenomena occur under the influence of an external electromagnetic field. Typical examples include the Zeeman effect, Landau levels and superconductivity. So, quite a few problems in computational solid state physics and quantum chemistry require the solution of the Schr\"{o}dinger equation in the presence of an electromagnetic field
\begin{equation}\label{original}
\begin{aligned}
\rmi\varepsilon \partial_t u &= \frac{1}{2} (\rmi\varepsilon\nabla+A)^2u+Vu,\quad t\ge 0, \ x \in \mathbb{R}^d, \\
u(0,x)&= u_0(x).
\end{aligned}
\end{equation}
Here, the unknown $u=u(t,x) \in \mathbb{C}$ is the quantum mechanical wave function, $V(t,x) \in \mathbb{R}$ is the scalar potential and $A(t,x) = (A_1(t,x),\dots, A_d(t,x))^{\sf T} \in \mathbb{R}^d$ is the vector potential. In addition  $\varepsilon \in (0,1]$ denotes the small semi-classical parameter which is the scaled Planck constant. The equation is considered subject to vanishing boundary conditions, i.e., $\lim_{\lvert x \rvert \to \infty}u(t,x)=0$. 
We recall that mass is a conserved quantity of this equation.

Exponential splitting schemes constitute a well-established class of methods for the numerical solution of Schr\"odinger equations (see, e.g., \cite{BJM02, F12, JL00, JMS11, L08}). In this approach, the kinetic part is solved in Fourier space, which gives spectral accuracy in space, whereas the multiplicative potential is integrated pointwise in physical space. The transformation between Fourier and physical space is carried out using the fast Fourier transform, which results in an overall fast algorithm. In our situation, however, when the vector potential depends on the position, we get an additional advection term, which cannot be handled efficiently with Fourier techniques.

Thus, the structure of problem \eqref{original} suggests to split the equation into three subproblems: a potential step which collects the scalar terms of the potentials (which are pointwise multiplications), a kinetic step which involves the Laplacian, and an advection step which results from the vector potential. For carrying out a time step, each of these steps is solved separately and their solutions are recombined to define the numerical approximation. This is the underlying idea of exponential splitting schemes (see \cite{S68, HLW00, MQ02}). In this paper we analyse a first-order method, the so-called Lie splitting. Note, however, that higher-order methods can be analysed in exactly the same way, if the underlying problem has enough spatial smoothness, see~\cite{HO09,JL00}.

Splitting the magnetic Schr\"odinger equation for the purpose of its numerical solution into three subproblems is not a new idea. In their recent paper \cite{JZ13}, Jin and Zhou proposed such a scheme. For the solution of the advection step, they considered a semi-Lagrangian approach. Such an approach has been used in many other fields as well (see, e.g., \cite{SRBG99, EO14, EO15}).

Our present paper differs from~\cite{JZ13} mainly in the following aspects: we give a framework for carrying out an abstract convergence proof for exponential splitting methods applied to \eqref{original}, and we give a detailed error analysis for the Lie splitting scheme by identifying the required smoothness assumptions on the data. Moreover, we address conservation properties of the scheme and identify an alternative to Lagrange interpolation, as the latter does not conserve mass.

The outline of this paper is as follows. We start in section \ref{Convergence} with an abstract convergence result for splitting into three subproblems. Guided by the properties of the magnetic Schr\"odinger equation, we present an analytic framework that allows us to prove convergence for exponential splitting schemes. We exemplify this by proving that Lie splitting applied to \eqref{original} has order of convergence one, as expected.

In section \ref{description} we apply a gauge transformation to the magnetic Schr\"odinger equation to obtain the equivalent formulation \eqref{main} with a divergence-free vector potential. This formulation is used in \eqref{eq:steps} to define the employed splitting. In the following section we show how to compute the solution of the kinetic step in spectral space and that of the potential step in physical space. For the advection step we use the method of characteristics. However, since the characteristic curves do not cut the previous time horizon at grid points, in general, special care has to be taken. We compare three different possibilities, namely discrete Fourier series evaluation, local polynomial interpolation and Fourier series evaluation by a nonequispaced fast Fourier transform (NFFT), see~\cite{KKP09}. The latter allows us to evaluate a Fourier series at an arbitrary set of points in a fast way. To our knowledge, this transform was not yet applied in the present context.

In section \ref{Numerical} we present some numerical results. Our main goal is the comparison of the different approximations used in the advection step. In particular, we study how well the considered numerical algorithms preserve mass, and how they compare in terms of computational efficiency.

\section{Splitting into three operators}\label{Convergence}
For the numerical solution of \eqref{original}, we propose a splitting approach. Motivated by the particular form of the vector field, which is the sum of a kinetic, a potential and an advective part, we consider a splitting into three terms. For this purpose, we formulate \eqref{original} as an abstract initial value problem
\begin{equation}\label{prb}
\begin{aligned}
\partial_t u &= (\Acal+\Bcal+\Ccal)u, \quad 0\le t \le T,\\
u(0)&=u_0
\end{aligned}
\end{equation}
in a Banach space $X$ with norm $\| \cdot \|$. Below, we will state an analytic framework for these operators $\Acal$, $\Bcal$ and $\Ccal$ that, on the one hand, is sufficiently general to include the magnetic Schr\"odinger equation as an example and, on the other hand, allows us to carry out an abstract convergence proof for (exponential) splitting methods. We will illustrate our approach by analysing in detail the Lie splitting scheme\footnote{Throughout the paper $\rme^{\tau\mathcal{L}}u_0$ will denote the exact solution at time $\tau$ of the abstract (linear) differential equation $\partial_t u = \mathcal{L}u$ with initial value $u(0)=u_0$.}
\begin{equation}\label{eq:lie}
u_{n+1}=\rme^{\tau \Ccal} \rme^{\tau \Acal} \rme^{\tau \Bcal} u_n,
\end{equation}
where $\tau$ denotes the step size and $u_n$ is the numerical approximation to the true solution $u(t)=\rme^{t(\Acal+\Bcal+\Ccal)} u(0)$ at time $t=t_n=n\tau$. We will show below that the Lie splitting scheme is first-order convergent. Let us stress, however, that exactly the same ideas can be used to analyse exponential splitting methods of higher order.

In a first step, we will study the local error $\|\rme^{\tau \Ccal} \rme^{\tau \Acal} \rme^{\tau \Bcal} u(t) - u(t+\tau)\|$ of Lie splitting along the exact solution. For this purpose, we employ the following assumption.

\begin{hp} \label{hp}
Let $\Bcal$ be a bounded operator, and let $\Acal$, $\Ccal$, and $\Acal+\Ccal$ generate strongly continuous semigroups $\rme^{t\Acal}$, $\rme^{t\Ccal}$, and $\rme^{t(\Acal+\Ccal)}$ on $X$. We assume that the following bounds hold for $0\le t\le T$ along the exact solution
\begin{subequations}
\begin{align}
\Vert [\Acal,\Ccal]\rme^{s\Acal} u(t) \Vert &\leq c_1,\label{hp1}\\
\Vert \Ccal \rme^{s\Acal} \Bcal u(t) \Vert &\leq c_2,\label{hp5}\\
\Vert \Ccal^2 \rme^{s\Acal} u(t) \Vert &\leq c_3,\label{hp2}\\
\Vert \Ccal\rme^{\sigma\Acal}\Ccal\rme^{s(\Acal+\Ccal)} u(t) \Vert &\leq c_4,\label{hp3}\\
\Vert [\Acal+\Ccal,\Bcal] \rme^{s(\Acal+\Ccal)} u(t)\Vert &\leq c_5\label{hp4}
\end{align}
\end{subequations}
with some constants $c_1$, $c_2$, $c_3$, $c_4$, and $c_5$ that do not depend on $0\le \sigma,s\le T$.
\end{hp}

Next, we recall the definition of the $\varphi_k$ functions, which play some role in our analysis. For complex $z$ and integer $k\ge 1$, we set
\begin{equation}\label{eq:phi}
\varphi_k(z) = \int_{0}^{1}\rme^{(1-\theta)z}\frac{\theta^{k-1}}{(k-1)!}\,\rmd\theta.
\end{equation}
These functions are uniformly bounded in the complex half-plane $\myre z \le 0$ and analytic in $\CC$. Let $\mathcal{E}$ be the generator of a strongly continuous semigroup. Then, for all $k \ge 1$, the following identity holds in the domain of $\mathcal{E}^k$
\begin{equation}\label{eq:taylor-phi}
\rme^{\tau \mathcal{E}} = \sum_{j=0}^{k-1} \frac{\tau^j}{j!}\mathcal{E}^j+\tau^k\mathcal{E}^k\varphi_k(\tau \mathcal{E}).
\end{equation}
We are now in the position to state the local error bound.
\begin{thm}[Local error bound]
Under Assumption \ref{hp}, the following bound for the local error holds
\begin{equation} \label{err}
\Vert \rme^{\tau \Ccal} \rme^{\tau \Acal} \rme^{\tau \Bcal} u(t)-u(t+\tau) \Vert \leq C \tau^2, \qquad t \in [0,T-\tau]
\end{equation}
with a constant $C$ that does not depend on $t$ and $\tau$.
\end{thm}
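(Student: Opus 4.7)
My plan is to split the local error into two pieces and to treat each with variation of constants, combined with the commutator identity $[\rme^{sM},N]=\int_0^s\rme^{(s-\sigma)M}[M,N]\rme^{\sigma M}\,\rmd\sigma$, the $\varphi_k$ expansion~\eqref{eq:taylor-phi}, and the assumptions~\eqref{hp1}--\eqref{hp4}. Set
\begin{align*}
I_1 &= \bigl[\rme^{\tau\Ccal}\rme^{\tau\Acal}-\rme^{\tau(\Acal+\Ccal)}\bigr]\rme^{\tau\Bcal}u(t),\\
I_2 &= \bigl[\rme^{\tau(\Acal+\Ccal)}\rme^{\tau\Bcal}-\rme^{\tau(\Acal+\Bcal+\Ccal)}\bigr]u(t),
\end{align*}
so that the local error equals $I_1+I_2$. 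Here $I_1$ is the Lie splitting error between the two unbounded generators $\Acal$ and $\Ccal$ acting on the pre-propagated state $\rme^{\tau\Bcal}u(t)$, while $I_2$ is the Lie splitting error between the combined generator $\Acal+\Ccal$ and the bounded perturbation $\Bcal$ acting on $u(t)$.

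For $I_2$, differentiating $s\mapsto\rme^{(\tau-s)(\Acal+\Bcal+\Ccal)}\rme^{s(\Acal+\Ccal)}\rme^{s\Bcal}u(t)$ in $s$ yields
$$I_2=\int_0^\tau\rme^{(\tau-s)(\Acal+\Bcal+\Ccal)}\bigl[\rme^{s(\Acal+\Ccal)},\Bcal\bigr]\rme^{s\Bcal}u(t)\,\rmd s,$$
and the commutator identity with $M=\Acal+\Ccal$, $N=\Bcal$ brings the inner factor into the form $[\Acal+\Ccal,\Bcal]\rme^{\sigma(\Acal+\Ccal)}$ integrated over $\sigma\in[0,s]$. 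Writing $\rme^{s\Bcal}u(t)=u(t)+s\Bcal\varphi_1(s\Bcal)u(t)$, the leading contribution is uniformly bounded by $c_5$ through~\eqref{hp4}, while the $s$-correction remains of order one thanks to the boundedness of $\Bcal$ and produces only higher-order terms. Together with the double integration on $[0,\tau]$, this gives $\|I_2\|\le C\tau^2$.

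For $I_1$, the same strategy produces
$$I_1=\int_0^\tau\rme^{(\tau-s)(\Acal+\Ccal)}\bigl[\rme^{s\Ccal},\Acal\bigr]\rme^{s\Acal}\rme^{\tau\Bcal}u(t)\,\rmd s.$$
I expand $\rme^{s\Ccal}=I+s\Ccal+s^2\Ccal^2\varphi_2(s\Ccal)$ by~\eqref{eq:taylor-phi}, so that $[\rme^{s\Ccal},\Acal]=s[\Ccal,\Acal]+s^2[\Ccal^2\varphi_2(s\Ccal),\Acal]$, and simultaneously write $\rme^{\tau\Bcal}u(t)=u(t)+\tau\Bcal\varphi_1(\tau\Bcal)u(t)$, producing four contributions. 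The principal one, of shape $s[\Ccal,\Acal]\rme^{s\Acal}u(t)$, is bounded by $c_1$ through~\eqref{hp1}. The $s^2$-remainder, after the Leibniz rule $[\Ccal^2,\Acal]=\Ccal[\Ccal,\Acal]+[\Ccal,\Acal]\Ccal$ and one more application of the commutator identity, leaves integrands of the forms $\Ccal^2\rme^{s\Acal}u(t)$ and $\Ccal\rme^{\sigma\Acal}\Ccal\rme^{s(\Acal+\Ccal)}u(t)$, controlled by~\eqref{hp2} and~\eqref{hp3}. The $\tau\Bcal$-correction to the principal term is rewritten via $\Acal\rme^{s\Acal}=\partial_s\rme^{s\Acal}$ and an integration by parts in $s$, so that the surviving action appears as $\Ccal\rme^{s\Acal}\Bcal u(t)$, covered by~\eqref{hp5}. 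The fourth, mixed contribution is of order $\tau^3$.

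The main technical obstacle is the bookkeeping required so that every surviving term, after all the commutator manipulations, matches one of the four shapes on the left-hand side of~\eqref{hp1}--\eqref{hp4}. Once this is done, uniform semigroup bounds on $\rme^{t\Acal}$, $\rme^{t(\Acal+\Ccal)}$, and $\rme^{t(\Acal+\Bcal+\Ccal)}$ over $[0,T]$ together with Assumption~\ref{hp} deliver $\|I_1\|\le C\tau^2$, which combined with the bound on $I_2$ yields~\eqref{err}.
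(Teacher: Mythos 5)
Your high-level strategy (interpolation between the split and unsplit flows, the commutator identity, and matching against \eqref{hp1}--\eqref{hp4}) is the right one, and your leading terms do land on the correct assumptions. But the specific decomposition $I_1+I_2$, with $\rme^{\tau\Bcal}$ kept \emph{inside} the unbounded-operator manipulations, produces remainder terms that Assumption \ref{hp} does not control, and the bookkeeping you defer does not actually close. Concretely: in $I_2$ the correction term has the shape $[\Acal+\Ccal,\Bcal]\,\rme^{\sigma(\Acal+\Ccal)}\,s\Bcal\varphi_1(s\Bcal)u(t)$; the boundedness of $\Bcal$ is of no help here because the unbounded commutator sits to the \emph{left}, and \eqref{hp4} is stated only along the exact solution $u(t)$, not for $\Bcal\varphi_1(s\Bcal)u(t)$. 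In $I_1$ the propagated state is $\rme^{\tau\Bcal}u(t)$ rather than $u(t)$, so even your ``principal'' term is $[\Ccal,\Acal]\rme^{s\Acal}\rme^{\tau\Bcal}u(t)$ and not literally \eqref{hp1}; its $\tau\Bcal$-correction is a \emph{second-order} commutator applied to $\Bcal\varphi_1(\tau\Bcal)u(t)$, whereas \eqref{hp5} only bounds the first-order quantity $\Ccal\rme^{s\Acal}\Bcal u(t)$, and your proposed integration by parts in $s$ also differentiates the outer factor $\rme^{(\tau-s)(\Acal+\Ccal)}$, creating an uncontrolled $(\Acal+\Ccal)$. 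Finally, in the $s^2$-remainder the operator $\varphi_2(s\Ccal)$ does not commute with $\Acal$, and $[\Ccal^2,\Acal]=\Ccal[\Ccal,\Acal]+[\Ccal,\Acal]\Ccal$ is of higher differential order than $\Ccal^2$, so it cannot be reduced to the shapes in \eqref{hp2} and \eqref{hp3}.

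The paper avoids all of these traps by exploiting the boundedness of $\Bcal$ \emph{first}: it writes $\rme^{\tau\Bcal}=I+\tau\Bcal+\mathcal{O}(\tau^2)$ and, correspondingly, peels $\Bcal$ off the exact flow by two applications of the variation-of-constants formula. The error then splits into a $P$-part, in which every unbounded composition acts directly on $u(t)$ and matches \eqref{hp1}, \eqref{hp2}, \eqref{hp3} exactly (the $\varphi_2$ remainder is taken as $\tau^2\varphi_2(\tau\Ccal)\Ccal^2\rme^{\tau\Acal}u(t)$, where $\varphi_2(\tau\Ccal)$ \emph{does} commute with $\Ccal^2$), and a $Q$-part carrying an explicit factor $\tau$, so that only the first-order bounds \eqref{hp5} and \eqref{hp4} are needed there. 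If you reorganize your argument along these lines -- expand $\rme^{\tau\Bcal}$ before, not after, introducing the commutators -- every surviving term matches one of the assumed shapes and your estimate goes through.
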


\begin{proof}
Our proof uses ideas developed in \cite{JL00}. Since $\Bcal$ is bounded, the numerical solution can be expanded in the following way
\begin{equation} \label{LocErr}
\begin{aligned}
\rme^{\tau \Ccal} \rme^{\tau \Acal} \rme^{\tau \Bcal} u(t)
& = \rme^{\tau \Ccal} \rme^{\tau \Acal}\left( I + \tau \Bcal+ \mathcal{O}(\tau^2)\right)u(t)\\
& = \underbrace{\rme^{\tau \Ccal} \rme^{\tau \Acal}u(t)}_{P_1}+\underbrace{\tau \rme^{\tau \Ccal} \rme^{\tau \Acal} \Bcal u(t)}_{Q_1} + \mathcal{O}(\tau^2).
\end{aligned}
\end{equation}
The exact solution, on the other hand, is expanded with the help of the variation-of-constants formula. Applying this formula twice yields the representation
\begin{equation} \label{exact}
\begin{aligned}
\rme^{\tau (\Acal+\Bcal+\Ccal)}u(t)
&= \rme^{\tau(\Acal+\Ccal)}u(t)+\int_0^{\tau}\rme^{s(\Acal+\Ccal)}\Bcal\rme^{(\tau-s) (\Acal+\Bcal+\Ccal)}u(t)\rmd s\\
& = \underbrace{\rme^{\tau(\Acal+\Ccal)}u(t)}_{P_2} +\underbrace{\int_0^{\tau}\rme^{s(\Acal+\Ccal)}\Bcal\rme^{(\tau-s) (\Acal+\Ccal)}u(t)\rmd s}_{Q_2}+ \mathcal{O}(\tau^2).
\end{aligned}
\end{equation}
Collecting all the terms, we can rewrite the local error as
\begin{equation}\label{eq:le}
\rme^{\tau \Ccal} \rme^{\tau \Acal} \rme^{\tau \Bcal} u(t)- u(t+\tau) = P+Q+ \mathcal{O}(\tau^2),
\end{equation}
where $P = P_1-P_2$ and $Q = Q_1-Q_2$.

For expanding $P_1$ we employ the $\varphi_2$ function (see \eqref{eq:taylor-phi}) to get
\[
\rme^{\tau \Ccal} \rme^{\tau \Acal}u(t) = \rme^{\tau \Acal}u(t)+ \tau \Ccal \rme^{\tau \Acal}u(t) +\tau^2 \Ccal^2 \varphi_2(\tau\Ccal)\rme^{\tau \Acal}u(t).
\]
Using the variation-of-constants formula twice, we can rewrite $P_2$ as
\[
\begin{aligned}
\rme^{\tau(\Acal+\Ccal)}u(t) &= \rme^{\tau\Acal}u(t)+\int_0^{\tau}\rme^{s\Acal}\Ccal\rme^{(\tau-s) \Acal}u(t)\,\rmd s\\
&\quad +\int_0^{\tau}\rme^{s\Acal}\Ccal\int_0^{\tau-s}\rme^{\sigma\Acal}\Ccal\rme^{(\tau-s-\sigma) (\Acal+\Ccal)}u(t) \,\rmd \sigma \rmd s.
\end{aligned}
\]
Thus, to bound $P$, we need first to estimate
\begin{equation} \label{1P}
\tau \Ccal \rme^{\tau \Acal}u(t)-\int_0^{\tau}\rme^{s\Acal}\Ccal\rme^{(\tau-s)\Acal}u(t)\rmd s,
\end{equation}
and then to bound the remaining terms.
Let $f(s)=\rme^{s\Acal}\Ccal\rme^{(\tau-s) \Acal}u(t)$. Then, the expression \eqref{1P} becomes
\[
\tau f(0)-\int_0^{\tau} f(s) \rmd s= \tau f(0)- \int_0^{\tau} \left(f(0) + \int_0^s f'(\sigma) \rmd \sigma \right) \rmd s = -\int_0^{\tau}\!\!\! \int_0^s f'(\sigma) \rmd \sigma \rmd s,
\]
and can be bounded with assumption \eqref{hp1}
\begin{equation} \label{T1}
\bigg\Vert \int_0^{\tau}\!\!\! \int_0^s \rme^{\sigma\Acal}[\Acal,\Ccal]\rme^{(\tau-\sigma)\Acal} u(t) \rmd \sigma \rmd s \bigg\Vert \leq c\tau^2.
\end{equation}
Furthermore, by employing assumptions \eqref{hp2} and \eqref{hp3}, the remaining terms in $P$ can be estimated as
\begin{equation} \label{T2}
\lVert \tau^2 \Ccal^2 \varphi_2(\tau\Ccal)\rme^{\tau \Acal}u(t) \Vert = \Vert \tau^2 \varphi_2(\tau\Ccal)\Ccal^2 \rme^{\tau \Acal}u(t) \rVert  \leq c\tau^2
\end{equation}
and
\begin{equation} \label{T3}
\bigg\lVert \int_0^{\tau}\rme^{s\Acal}\Ccal\int_0^{\tau-s}\rme^{\sigma\Acal}\Ccal\rme^{(\tau-s-\sigma) (\Acal+\Ccal)}u(t) \rmd \sigma \rmd s \bigg\rVert \leq c\tau^2.
\end{equation}
Taking all together, we have shown that $P=\mathcal{O}(\tau^2)$.

As regards $Q$, by setting $g(s)=\rme^{s(\Acal+\Ccal)}\Bcal\rme^{(\tau-s) (\Acal+\Ccal)}u(t)$ and proceeding in the same way as for \eqref{1P} we obtain
\[
\begin{aligned}
Q &= \tau \rme^{\tau \Ccal} \rme^{\tau \Acal} \Bcal u(t) - \int_{0}^{\tau} g(s) \,\rmd s\\
&=\tau \rme^{\tau \Ccal} \rme^{\tau \Acal} \Bcal u(t) - \tau g(\tau) - \int_{0}^{\tau} \!\!\!\int_{\tau}^{s}g'(\sigma) \,\rmd \sigma \rmd s\\
&=\tau \rme^{\tau \Ccal} \rme^{\tau \Acal} \Bcal u(t)-\tau \rme^{\tau(\Acal+\Ccal)}\Bcal u(t) \\
&\quad - \int_0^{\tau}\!\!\!\int_{\tau}^{s} \rme^{\sigma(\Acal+\Ccal)}[\Acal+\Ccal,\Bcal] \rme^{(\tau-\sigma)(\Acal+\Ccal)}u(t)\,\rmd \sigma \rmd s.
\end{aligned}
\]
The double integral is bounded with the help of assumption \eqref{hp4} by $c\tau^2$. For the remaining two terms, we use that
$$
\rme^{\tau \Ccal} \rme^{\tau \Acal} \Bcal u(t) = \rme^{\tau \Acal} \Bcal u(t) + \tau \Ccal\varphi_1(\tau \Ccal) \rme^{\tau \Acal} \Bcal u(t)
$$
and employ once more the variation-of-constants formula
$$
\rme^{\tau(\Acal+\Ccal)}\Bcal u(t) = \rme^{\tau \Acal} \Bcal u(t) + \int_0^{\tau} \rme^{s\Acal}\Ccal\rme^{(\tau-s)(\Acal+\Ccal)}\Bcal u(t)\,\rmd s.
$$
Assumption \eqref{hp5} shows that their difference is again bounded by $c\tau^2$. From this we conclude the assertion.\qed
\end{proof}

Assumption~\ref{hp} guarantees that the semigroups, generated by $\Acal$, $\Bcal$, and $\Ccal$ satisfy the bounds
\[
\Vert \rme^{t \Acal}\Vert \leq M_1 \rme^{t\omega_1}, \quad
\Vert \rme^{t \Bcal}\Vert \leq \rme^{t\omega_2}, \quad
\Vert \rme^{t \Ccal}\Vert \leq M_3\rme^{t\omega_3},\qquad t\ge 0
\]
for some constants $M_1\ge 1$, $M_3\ge 1$, $\omega_1$, $\omega_2$, and $\omega_3$. Moreover, it is possible to choose an equivalent norm $\|\cdot\|_*$ on $X$ such that $\Vert \rme^{t \Acal}\Vert_* \leq \rme^{t\omega_{\Acal}}$. Unfortunately, this is still not enough to prove stability, in general. Therefore, we impose an additional assumption.

\begin{hp} \label{hpStab}
There is a constant $\omega_C$ such that $\Vert \rme^{t \Ccal}\Vert_* \leq \rme^{t\omega_{\Ccal}}$ for all $t\ge 0$.
\end{hp}
Under this additional assumption, it is easy to show stability.

\begin{thm} [Stability]
Under Assumptions \ref{hp} and \ref{hpStab}, Lie splitting is stable, i.e., there is a constant $C$ such that
\begin{equation} \label{stab}
\left\Vert \left(\rme^{\tau \Ccal} \rme^{\tau \Acal} \rme^{\tau \Bcal} \right)^j\right\Vert \leq C
\end{equation} 	
for all $j\in \mathbb{N}$ and $\tau\ge 0$ satisfying $0\le j\tau \le T$.\qed
\end{thm}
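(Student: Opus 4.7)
The plan is to exploit the equivalent norm $\|\cdot\|_*$, in which both $\rme^{t\Acal}$ and $\rme^{t\Ccal}$ satisfy contractive-type bounds without multiplicative constants, so that the three factors of one Lie step multiply into a single exponential in $\tau$. Iteration then gives a uniform bound for $j\tau \in [0,T]$, which transfers back to $\|\cdot\|$ via the equivalence of the two norms. The crucial role of Assumption~\ref{hpStab} in this strategy is to eliminate the constant $M_3$ in front of $\rme^{t\omega_3}$; without it, iterating $j$ times would produce a factor $(M_1 M_3)^j$, which cannot be controlled uniformly as $\tau\to 0$.

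Concretely, I would first collect the three semigroup estimates in $\|\cdot\|_*$. The norm construction preceding Assumption~\ref{hpStab} yields $\|\rme^{t\Acal}\|_* \le \rme^{t\omega_{\Acal}}$, and Assumption~\ref{hpStab} itself provides $\|\rme^{t\Ccal}\|_* \le \rme^{t\omega_{\Ccal}}$. For the bounded operator $\Bcal$, the operator norm $\|\Bcal\|_*$ is finite because $\|\cdot\|_*$ is equivalent to $\|\cdot\|$, so the exponential series representation gives $\|\rme^{t\Bcal}\|_* \le \rme^{t\|\Bcal\|_*}$. Submultiplicativity of the operator norm then yields
$$
\|\rme^{\tau\Ccal}\rme^{\tau\Acal}\rme^{\tau\Bcal}\|_* \le \rme^{\tau\omega}, \qquad \omega := \omega_{\Acal} + \omega_{\Ccal} + \|\Bcal\|_*.
$$

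In the final step I would iterate this inequality $j$ times and use $j\tau \le T$ to obtain $\|(\rme^{\tau\Ccal}\rme^{\tau\Acal}\rme^{\tau\Bcal})^j\|_* \le \rme^{j\tau\omega} \le \rme^{T\omega}$. The equivalence of $\|\cdot\|$ and $\|\cdot\|_*$ supplies constants $c_1,c_2>0$ with $c_1\|\cdot\| \le \|\cdot\|_* \le c_2\|\cdot\|$; the induced operator-norm inequality transforms this into the bound $\|(\rme^{\tau\Ccal}\rme^{\tau\Acal}\rme^{\tau\Bcal})^j\| \le (c_2/c_1)\rme^{T\omega}$, which is precisely \eqref{stab} with $C := (c_2/c_1)\rme^{T\omega}$. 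There is no genuine obstacle in the argument: the delicate point of trading the $M_i$ constants for $1$'s in the exponents has already been performed by the choice of $\|\cdot\|_*$ and by Assumption~\ref{hpStab}, and the only small verification is that boundedness of $\Bcal$ is preserved under the equivalent norm, so that $\|\Bcal\|_*$ merely contributes an additive constant to $\omega$.
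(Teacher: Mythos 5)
Your proposal is correct and follows essentially the same route as the paper: pass to the equivalent norm $\|\cdot\|_*$, bound each of the three factors by a pure exponential (using the renorming for $\Acal$, Assumption~\ref{hpStab} for $\Ccal$, and boundedness for $\Bcal$), multiply, iterate, and transfer back by norm equivalence. The paper compresses all of this into one line, so your write-up is simply a fully spelled-out version of the same argument.
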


\begin{proof}
Our assumptions imply that $\left\|\rme^{\tau \Ccal}\rme^{\tau \Acal}\rme^{\tau \Bcal}\right\|_* \le \rme^{\tau(\omega_{\Acal} + \omega_{\Bcal} + \omega_{\Ccal})}$ from which the assertion follows.
\qed
\end{proof}

From consistency and stability, convergence follows in a standard way.

\begin{thm}[Global error bound]\label{thm:convergence}
Under Assumptions \ref{hp} and \ref{hpStab}, the Lie splitting discretization \eqref{eq:lie} of the initial value problem \eqref{prb} is convergent of order~1, i.e., there exists a constant $C$ such that
$$
\Vert u_n-u(t_n) \Vert \leq C \tau,
$$
for all $n\in\mathbb{N}$ and $\tau> 0$ satisfying $0\le n\tau = t_n \le T$.
\end{thm}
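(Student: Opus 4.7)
The plan is a standard Lady Windermere's fan argument: both ingredients---an $\mathcal{O}(\tau^2)$ local error and uniform stability of the discrete flow---have just been established, so the global first-order bound follows by telescoping and summing.

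Concretely, I would set $S_\tau := \rme^{\tau\Ccal}\rme^{\tau\Acal}\rme^{\tau\Bcal}$, so that $u_n = S_\tau^n u_0 = S_\tau^n u(t_0)$, and decompose the error as
\begin{equation*}
u_n - u(t_n) = \sum_{j=0}^{n-1} S_\tau^{\,n-1-j}\bigl(S_\tau u(t_j) - u(t_{j+1})\bigr).
\end{equation*}
The $j$-th summand is precisely the local error committed over the subinterval $[t_j, t_{j+1}]$ along the exact solution, subsequently propagated by $n-1-j$ applications of the numerical flow.

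Next I would invoke the local error theorem to bound each factor $S_\tau u(t_j) - u(t_{j+1})$ by $C_L\,\tau^2$; the constant $C_L$ can be chosen the same for every $j$ because the bounds in Assumption~\ref{hp} are postulated uniformly in $t\in[0,T]$. The stability theorem then provides a single $C_S$ with $\|S_\tau^{\,n-1-j}\| \le C_S$ whenever $0 \le (n-1-j)\tau \le T$. Applying the triangle inequality and using $n\tau \le T$ yields
\begin{equation*}
\|u_n - u(t_n)\| \le n\, C_S C_L\, \tau^2 \le C_S C_L T \cdot \tau,
\end{equation*}
which is the claimed bound with $C := C_S C_L T$.

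I do not foresee a genuine difficulty here. The only point worth verifying is that the local-error constant can indeed be chosen independently of the node $t_j$, which is granted by the uniform-in-$t$ form of Assumption~\ref{hp}; once that is observed, the argument reduces to the one-line telescoping identity above together with the estimate $n \le T/\tau$, and no further properties of $\Acal$, $\Bcal$, $\Ccal$ are needed.
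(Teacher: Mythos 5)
Your argument is exactly the paper's: the same telescoping decomposition of the global error into locally committed errors propagated by powers of the numerical flow, followed by the local error bound \eqref{err}, the stability bound \eqref{stab}, and the estimate $n\le T/\tau$. The proposal is correct and matches the paper's proof.
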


\begin{proof}
We express the global error with the help of a telescopic sum
\[
\begin{aligned}
u_n-u(t_n)& =\left(\left(\rme^{\tau \Ccal} \rme^{\tau \Acal} \rme^{\tau \Bcal} \right)^n -\rme^{n\tau(\Acal+\Bcal+\Ccal)}\right)u(0) \\
& =\sum_{j=0}^{n-1} \left(\rme^{\tau \Ccal} \rme^{\tau \Acal} \rme^{\tau \Bcal} \right)^{n-j-1} \left(\rme^{\tau \Ccal} \rme^{\tau \Acal} \rme^{\tau \Bcal}u(t_j)-u(t_{j+1})\right)
\end{aligned}
\]
and use the estimates \eqref{err} and \eqref{stab}.\qed
\end{proof}

\section{Example: the magnetic Schr\"{o}dinger equation} \label{description}
The electromagnetic field in $\RR^3$ is the combination of the electric field $E$ and the magnetic field $B$. Both fields depend on time and space, in general. Mathematically, they are given by a scalar potential $V$ and a vector potential $A$, respectively
\[
E =- \nabla V-\frac{\partial A}{\partial t}, \qquad B= \nabla \times A.
\]
Making use of the fact that we can impose conditions on the potentials as long as we do not affect the resulting fields, we will apply the following transformations
\begin{equation} \label{transform}
\begin{aligned}
\tilde{u}(t,x) & = u(t,x) \,\rme^{\rmi\lambda(t,x)}, \\
\tilde{A}(t,x) & = A(t,x)+\varepsilon\nabla\lambda(t,x), \\
\tilde{V}(t,x) & = V(t,x)-\varepsilon\partial_t \lambda(t,x).
\end{aligned}
\end{equation}
One natural choice is to impose a so-called Coulomb gauge, i.e., to select $\lambda$ in such a way that $\nabla \cdot \tilde{A}=0$. Consequently, this gauge $\lambda$ has to satisfy the Poisson equation $\varepsilon \Delta \lambda = -\nabla\cdot A$.

Applying \eqref{transform} to the Schr\"odinger equation \eqref{original} and dropping right away the tildes, we obtain the following problem
\begin{equation} \label{main}
\begin{aligned}
\rmi\varepsilon \partial_t u &= -\frac{\varepsilon^2}{2}\Delta u+\rmi\varepsilon A \cdot \nabla u+ \frac{1}{2}\lvert A \rvert ^2 u +Vu, \quad t\in [0,T],\\
u(0,x)&=u_0(x)
\end{aligned}
\end{equation}
with a divergence-free vector potential $A$.

We are now in the position to give a precise formulation of the three subproblems that are used in our splitting. Henceforth, they will be called potential, kinetic and advection step, respectively:
\begin{subequations}\label{eq:steps}
\begin{align}
\partial_t u & =\Bcal u = -\frac{\rmi}{\varepsilon}\left(\frac12\lvert A \rvert^2 +V\right)u, \label{potential}\\
\partial_t u &= \Acal u = \frac{\rmi\varepsilon}{2} \Delta u,  \label{kinetic}\\
\partial_t u &= \Ccal u= A \cdot \nabla u,\qquad \nabla\cdot A=0.  \label{convection}
\end{align}
\end{subequations}
The kinetic step \eqref{kinetic} can be handled analytically in Fourier space, whereas the potential step \eqref{potential} is easily performed in physical space. For the advection step \eqref{convection} we will present three modifications of a semi-Lagrangian method in section \ref{sec:adv} below.

An important feature of \eqref{original} and \eqref{main} is the conservation of mass $m = \Vert u (t,\cdot)\Vert_{L^2}^2$, i.e., $\frac{\partial}{\partial t} \Vert u(t,\cdot) \Vert_{L^2}^2 = 0$.
The split step solution based on \eqref{eq:steps} is mass conserving as well. Indeed, the kinetic step preserves the $L^2$ norm due to Parseval's identity. The modulus of the solution of the potential step is preserved, and we are also able to show that the advection step conserves the mass. This is seen by
multiplying \eqref{convection} by $\overline u$
\[
\overline{u}\,\partial_t u-\overline{u}\, A\cdot \nabla u= 0
\]
and adding this equation to its complex conjugate, which results in
\[
\partial_t \lvert u \rvert^2 = A \cdot \nabla \lvert  u \rvert^2.
\]
Integrating this last equation by parts shows
\[
\partial_t\Vert u \Vert_{L^2}^2 = \int \partial_t \lvert u \rvert^2 \,\rmd x = \int A \cdot \nabla \lvert u \rvert ^2  \,\rmd x = -\int \lvert u \rvert ^2 \nabla \cdot A \,\rmd x =0,
\]
where the last identity follows from the Coulomb gauge.

Henceforth, we consider \eqref{main} and \eqref{eq:steps} on the hyperrectangle $\Omega=\Pi_{i=1}^d[a_i,b_i)$, subject to periodic boundary conditions. In particular, the potentials $V$ and $A$ are assumed to be periodic functions on $\Omega$. Then, all what has been said in this section remains valid.

We finally remark that our splitting approach also works if $A$ is not divergence-free. In this case the potential operator is given by
\[
\mathcal{B}=-\frac{\rmi}{\varepsilon}\left(\frac{1}{2}\lvert A \rvert^2+V\right)+\frac{1}{2}\nabla \cdot A,
\]
whereas the other two operators stay the same. However, in this case, we will lose the conservation of mass for the potential step.

\section{Space discretization, potential and kinetic step}
We discretize the hyperrectangle $\Omega=\prod_{i=1}^d [a_i,b_i)$ by a regular grid. For $1\le i \le d$, let $N_i\ge 2$ be an even integer and let
\begin{equation}\label{eq:indexset}
I_N=\ZZ^d\cap \prod_{i=1}^d\left[-\tfrac{N_i}{2},\tfrac{N_i}{2}\right).
\end{equation}
For $j=(j_1,\ldots,j_d)\in I_N$ we consider the grid points $x^j$ with components
$$
x^j_i = \frac{a_i+b_i}2 +\frac{j_i}{N_i}(b_i-a_i),\qquad 1\le i\le d.
$$
For performing the potential step, we solve the ordinary differential equation~\eqref{potential} at each grid point $x^j$. More precisely, starting with an initial value $v$ at time $t_n$, we solve
$$
\dot w(s) = -\frac{\rmi}{\varepsilon}\left(\frac12\left| A(t_n+s,x^j) \right|^2 +V(t_n+s,x^j)\right)w(s), \quad w(0) = v(x^j)
$$
to obtain
$$
\left(\rme^{\tau\cal B}v\right)(x^j) = w(\tau).
$$
If the potentials $A$ and $V$ are time-independent, the analytic solution is readily available. Otherwise, a quadrature method (up to machine precision) can be employed.

The kinetic step is approximated in Fourier space. For a given function
\begin{equation*}
v\colon \prod_{i=1}^d [a_i,b_i)\to \CC,
\end{equation*}
let $\hat v_k$ denote its Fourier coefficients, i.e.
$$
v(x)=\sum_{k\in I_N} \hat v_k E_k(x), \qquad
E_k(x)=\prod_{i=1}^d\frac{\rme^{2\pi\rmi k_i(x_i-a_i)/(b_i-a_i)}}{\sqrt{b_i-a_i}},
$$
where $x=(x_1,\ldots,x_d)$. Further, let
$$
\lambda_i = \frac{\rmi \varepsilon}2\left(\frac{2\pi k_i}{b_i-a_i}\right)^2.
$$
The Fourier coefficients of $\rme^{\tau\cal A}v$ are then given by $\rme^{\tau\lambda_i} \hat v_{k_i}$. The transformation between physical and Fourier space is usually carried out with the fast Fourier transform.

\section{Advection step}\label{sec:adv}
In this section we describe the solution of the advection step
\begin{equation}\label{eq:adv}
\left\{
\begin{aligned}
\partial_t v(t,x)&=A(x)\cdot \nabla v(t,x),&t&\in[0,\tau],\\
v(0,x)&=v_0(x).
\end{aligned}\right.
\end{equation}
Our approach is based on the method of characteristics, i.e., we make use of the curves $s\mapsto x(s)\in \RR^d$ satisfying the $d$-dimensional system of ordinary differential equations
\begin{equation*}
\dot x(s)=-A(x(s)).
\end{equation*}
Since the solution of the advection equation \eqref{eq:adv} is constant along characteristics, we have $v(\tau,x^j)=v(0,x^j(0))$ for each grid point $x^j$, $j\in I_N$, where $x^j(0)$ denotes the solution of
\begin{equation}\label{eq:ODE}
\left\{
\begin{aligned}
\dot x^j(s)&=-A(x^j(s)),&s&\in[0,\tau],\\
x^j(\tau)&=x^j
\end{aligned}
\right.
\end{equation}
at $s=0$. This system can be solved once and for all for each grid point with an explicit method at high precision, if the time step $\tau$ is kept constant. However, since $x^j(0)$ is not a grid point, in general, the value $v(0,x^j(0))=v_0(x^j(0))$ has to be recovered. We describe here three different procedures for achieving the evaluation of $v_0(x^{j}(0))$ at the set of $\prod_iN_i$ points $\{x^{j}(0)\}_j$. For the sake of simplicity, we only describe the one-dimensional case in detail. However, we also report the overall computational cost for the general $d$-dimensional case.

We remark that the same approach can be used for time dependent potentials $A(t,x)$. Instead of \eqref{eq:ODE} one has to solve the non-autonomous problem
\begin{equation}
\left\{
\begin{aligned}
\dot x^j(s)&=-A(t_n+s, x^j(s)),&s&\in[0,\tau],\\
x^j(\tau)&=x^j.
\end{aligned}
\right.
\end{equation}
Its numerical solution at $s=0$ is again used to define the sought-after approximation $v(\tau,x^j) = v(0,x^j(0))$.

\subsection{Direct Fourier series evaluation}
Since the initial value $v_0(x)$ of the advection step is the result of the solution of the kinetic step, the function $v_0$ is known through its Fourier coefficients $\{\hat v_k\}_k$. It is
therefore possible to directly evaluate
\begin{equation}\label{eq:DFS}
v_0(x^j(0))=
\sum_{k\in I_N}\hat v_{k}E_k(x^j(0)).
\end{equation}
In the $d$-dimensional case, the $\prod_i N_i^2$ values $E_k(x^j(0))$ can be precomputed once and for all, if the time step $\tau$ is constant. The evaluation cost of~\eqref{eq:DFS} at the point set $\{x^j(0)\}_j$ is then $\mathcal{O}(\prod_i N_i^2)$ at each time step.

\subsection{Local polynomial interpolation}
Another possibility (see, for instance, \cite{SRBG99,JZ13,EO15}) is local polynomial interpolation. It is possible to evaluate $v_0(x)$ at the grid points $\{x^j\}_j$ with an inverse fast Fourier transform of cost $\mathcal{O}\left(N_1\cdot\ldots\cdot N_d\cdot(\log N_1+\ldots+\log N_d)\right)$. An approximation of the values $v_0(x^j(0))$ can then be obtained by local polynomial interpolation
\begin{equation}\label{eq:PE}
v_0(x^j(0))\approx \sum_{k\in I_p}v_0(x^{j+k})L_{j+k}(x^j(0)).
\end{equation}
Here $\{x^{j+k}\}_k$ is the set of the $p$ grid points, $p$ even, satisfying
\begin{equation*}
x^{j-p/2}<\ldots<x^{j-1}\le x^j(0)<x^j<\ldots<x^{j+p/2-1},
\end{equation*}
and $L_{j+k}$ denotes the elementary Lagrange polynomial of degree $p-1$ that takes the value one at $x^{j+k}$ and zero at all the other $p-1$ points. Of course, the points $x^{j+k}$ and the corresponding values $v_0(x^{j+k})$ have to be taken by periodicity if necessary.

In the $d$-dimensional case, for a constant time step $\tau$ it is possible to precompute once and for all the elementary Lagrange polynomials at the points $\{x^j(0)\}_j$ (for a total amount of $p^d\prod_i N_i$ values). Then, the evaluation of \eqref{eq:PE} at each time step requires $\mathcal{O}(p^d\prod_i N_i)$ operations.

\subsection{Fourier series evaluation by NFFT}
The third explored possibility is the evaluation of~\eqref{eq:DFS} by means of an approximate fast Fourier transform. Among others, we tested the nonequi\-spaced fast Fourier transform (NFFT) by Keiner, Kunis and Potts~\cite{KKP09}. The computational cost of such an approach is $\mathcal{O}\bigl(N_1\cdot\ldots\cdot N_d\cdot(\log N_1+\ldots+\log N_d+ \abs{\log\epsilon}^d)\bigr)$, where $\epsilon$ is the desired accuracy.

For the readers' convenience, we briefly sketch the NFFT algorithm in one dimension, using the original notation of~\cite{KKP09}. Given some coefficients $\{\hat f_k\}_{k\in I_N}$, $N$ even, and a set of \emph{arbitrary} points $\{x^j\}_j\subset\left[-\frac12,\frac12\right)$, the aim is a fast evaluation of the one-periodic trigonometric polynomial
\begin{equation}\label{eq:f}
f(x)=\sum_{k\in I_N}\hat f_k \rme^{-2\pi\rmi kx}
\end{equation}
at the points $\{x^j\}_j$. In the first step, $f(x)$ is replaced with the ansatz
\begin{equation*}
s_1(x)=\sum_{\ell\in I_{n}}g_\ell\,\tilde\varphi\left(x-\tfrac{\ell}{n}\right),\quad \sigma\ge 2,\quad \text{$n=\sigma N$ even},
\end{equation*}
where $\{g_\ell\}_\ell$ are some coefficients to be defined later and
\begin{equation*}
\tilde \varphi(x)=\sum_{r\in\ZZ}\varphi(x+r)
\end{equation*}
is the one-periodic version of a \emph{window} function $\varphi\colon\RR\to\RR$. The window function $\varphi$ is chosen in such a way that $\tilde \varphi$ has a uniformly convergent Fourier series
\begin{equation*}
\tilde\varphi(x)=\sum_{k\in\ZZ}c_k(\tilde \varphi)\rme^{-2\pi\rmi kx}.
\end{equation*}
The default window function used by NFFT is the so called \emph{Keiser--Bessel function}
\begin{equation*}
\varphi(x)=\frac{1}{\pi}\left\{
\begin{aligned}
&\frac{\sinh(\beta\sqrt{m^2-n^2x^2})}{\sqrt{m^2-n^2x^2}}&&\text{for $\abs{x}<\frac{m}{n}$},\\
&\frac{\sin(\beta\sqrt{n^2x^2-m^2})}{\sqrt{n^2x^2-m^2}}&&\text{for $\abs{x}>\frac{m}{n}$},\\
&\beta&&\text{for $\abs{x}=\frac{m}{n}$}
\end{aligned}
\right.
\end{equation*}
with the \emph{shape} parameter $\beta=\pi(2-1/\sigma)$. The value of $m$ depends on the desired accuracy $\epsilon$ and is chosen $m=8$ for double precision. The \emph{oversampling} factor $\sigma$ is defined by
\begin{equation*}
\sigma=\frac{2^{\lceil\log_2 2N\rceil}}{N}.
\end{equation*}
That is, $n=\sigma N$ is the smallest power of two with $2\le \sigma<4$. Now we plug the Fourier series expansion of $\tilde\varphi(x)$ into $s_1(x)$ in order to get
\begin{equation*}
\begin{aligned}
s_1(x)&=\sum_{\ell\in I_{n}}g_\ell\,\tilde \varphi\left(x-\tfrac{\ell}{n}\right)\\
&= \sum_{\ell\in I_{n}}g_\ell\sum_{k\in\ZZ}c_k(\tilde\varphi)\rme^{-2\pi\rmi k\left(x-\frac{\ell}{n}\right)}\\
&=\sum_{k\in\ZZ}\left(\sum_{\ell\in I_{n}}g_\ell\rme^{2\pi\rmi k\frac{\ell}{n}}\right)c_k(\tilde\varphi)\rme^{-2\pi\rmi kx}
\end{aligned}
\end{equation*}
and apply a cutoff in the frequency domain
\begin{equation}\label{eq:s1cutoff}
s_1(x)\approx \sum_{k\in I_{n}}\left(\sum_{\ell\in I_{n}}g_\ell\rme^{2\pi\rmi k\frac{\ell}{n}}\right)c_k(\tilde\varphi)\rme^{-2\pi\rmi kx}
=\sum_{k\in I_{n}}\hat g_kc_k(\tilde\varphi)\rme^{-2\pi\rmi k x}.
\end{equation}
Comparing now equations~\eqref{eq:f} and \eqref{eq:s1cutoff}, we see that the coefficients $\{\hat g_k\}_k$ are simply given by
\begin{equation*}
\hat g_k=\left\{
\begin{aligned}
&\frac{\hat f_k}{c_k(\tilde \varphi)},&&k\in I_N,\\
&0,&&k\in I_n\setminus I_N,
\end{aligned}
\right.
\end{equation*}
and the values $\{g_\ell\}_\ell$ can be recovered by a fast Fourier transform of length $n$. The parameter $m$ is then used as a \emph{cutoff} to approximate in practice the window function $\varphi(x)$ with
\begin{equation*}
\psi(x)=\varphi(x)\chi_{\left[-\frac{m}{n},\frac{m}{n}\right]}(x).
\end{equation*}
In this way, $s_1(x)$ is further approximated by
\begin{equation*}
s_1(x)\approx s(x)=\sum_{\ell\in I_{n}}g_\ell\,\tilde \psi\left(x-\tfrac{\ell}{n}\right).
\end{equation*}
Now we use that $\tilde \psi$ vanishes outside of $-\frac{m}{n}\le x-\frac{\ell}{n}\le \frac{m}{n}$. Thus, for fixed $x^j$, the above sum contains at most $2m+1$ terms different from zero. Finally, $s(x)$ is evaluated at the set $\{x^j\}_j$, providing the desired approximation of $\{f(x^j)\}_j$.

\section{Application to the magnetic Schr\"odinger equation} \label{Verification}
In this section we exemplify the assumptions of Theorem~\ref{thm:convergence} for the magnetic Schr\"{o}dinger equation \eqref{main}. For this purpose, we choose $X=L^2(\Omega)$ with $\Omega = \prod_{i=1}^d [a_i,b_i)$ and assume that the potentials $A$ and $V$ are sufficiently smooth. Note that the potential operator $\Bcal$ is bounded, whereas the kinetic operator $\Acal$ and the advection operator $\Ccal$ are both unbounded. We start with the verification of Assumption \ref{hp}.
\begin{itemize}
\item[$\diamond$]
Condition \eqref{hp1}: Since $\rme^{t\Acal}w$ is the exact solution of the problem $\partial_t u=\Acal u$, $u(0)=w$, it preserves the smoothness of the initial data. Further, the commutator $[\Acal,\Ccal]$ is a second-order differential operator
\begin{align*}
[\Acal,\Ccal] u &= \frac{\rmi \varepsilon}{2}[\Delta, A\cdot \nabla]u\\
&= \frac{\rmi\varepsilon}{2} \Bigl(\Delta(A\cdot \nabla u)-A\cdot \nabla(\Delta u) \Bigr).
\end{align*}
So, we need to assume that the initial data are twice differentiable.
\item[$\diamond$]
Conditions \eqref{hp5}, \eqref{hp2}, and \eqref{hp3}: As $\Ccal$ is a first-order differential operator, it is again sufficient to require that the initial data are twice differentiable.
\item[$\diamond$]
Condition \eqref{hp4}: The commutator is a second-order differential operator
\[
[\Acal+\Ccal,\Bcal]u = \left[\frac{\rmi\varepsilon}{2} \Delta + A \cdot \nabla, -\frac{\rmi}{\varepsilon}\left(\frac{1}{2}\lvert A \rvert^2+V\right)\right]u,
\]
so the same smoothness as before is required.
\end{itemize}

Stability is easily verified. From the conservation of mass discussed at the end of section \ref{description}, we get $\lVert \rme^{\tau\Acal} \rVert_{L^2} = 1$ and $\lVert \rme^{\tau\Ccal} \rVert_{L^2} = 1$.

Note that the above bound for the advection semigroup only holds in the Coulomb gauge setting. However, by the method of characteristics, the solution of the advection step is of the form $u(t,x(t))=u_0(x(0))$, where $x(t)=x(0)+tA(x(0))+\mathcal{O}(t^2).$ Setting $\xi=x(0)$, we have
\[
\Vert u \Vert_{L^2}^2= \int_{\Omega} \vert u(x) \vert^2 \rmd x=  \int_{\Omega} \vert u_0(\xi) \vert^2 \rmd x = \int_{\Omega} \vert u_0(\xi) \vert^2 \left|\det\bigl(I+tA'(\xi)+\mathcal O(t^2)\bigr)\right|\rmd\xi.
\]	
Under the assumption that the partial derivatives of $A$ are bounded, we have
\[
\Vert u \Vert_{L^2}^2 \leq \Vert u_0 \Vert^2_{L^2}+ C t \Vert u_0 \Vert^2_{L^2}\leq (1+Ct)\Vert u_0 \Vert^2_{L^2} \le \rme^{2t\omega_{\Ccal}}\Vert u_0 \Vert^2_{L^2},
\]
which is exactly the weaker bound required in Assumption \ref{hpStab}.

\section{Numerical experiments} \label{Numerical}
The first numerical example is a variation of~\cite[Example 2]{JZ13}. The vector potential is chosen as $A(x)=\sin(2\pi x)/5+1/5$ and the scalar potential as $V(x)=\cos(2\pi x)/5+4/5$. The initial value is $u_0(x)=\sqrt{\rho_0(x)}\exp(\rmi S_0(x)/\varepsilon)$, where
\begin{equation*}
\rho_0(x)=\rme^{-50\left(x-\frac{1}{2}\right)^2},\quad
S_0(x)=-\frac{\log\bigl(\rme^{5(x-\frac{1}{2})}+\rme^{-5(x-\frac{1}{2})}\bigr)}{5},\quad \varepsilon=\frac{1}{128}.
\end{equation*}
Note that this initial value is not periodic. However, due to the exponential decay of $\rho_0(x)$, the problem can be solved numerically up to time $T=0.42$ in the space interval $[0,1]$ by assuming periodic boundary conditions. The Coulomb gauge transformation yields
\begin{equation*}
\lambda(x)=\frac{\cos(2\pi x)}{10\pi\varepsilon}.
\end{equation*}
\begin{figure}[tb]
\centering
\includegraphics[scale=0.45]{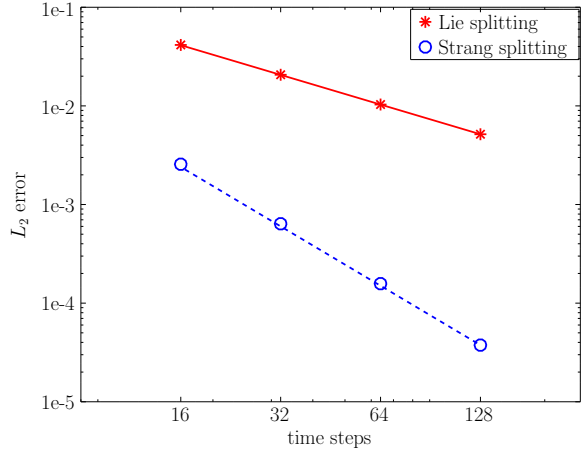}
\caption{Temporal errors (stars, circles) for the Lie and Strang splitting
methods and reference orders (lines) for the one-dimensional example.}
\label{fig:order1d}
\end{figure}%
In Figure~\ref{fig:order1d} we plot the global errors of Lie splitting at the final time $T=0.42$ for various time steps and $N=2048$ spatial discretization points. The reference solution was obtained with 512 time steps. We include in this figure the error behavior of Strang splitting, defined by
\begin{equation}\label{eq:strang}
u_{n+1}=\rme^{\frac{\tau}{2} \Bcal}\rme^{\frac{\tau}{2} \Acal}\rme^{\tau \Ccal} \rme^{\frac{\tau}{2} \Acal} \rme^{\frac{\tau}{2} \Bcal}u_n.
\end{equation}
In this double logarithmic diagram, the errors of a method lie on a straight line of slope $q$, where $q$ denotes the order of the method. Both, Lie and Strang splitting show their expected orders of convergence. Lie splitting has order one, as proved in Theorem~\ref{thm:convergence}, whereas Strang splitting converges with order two.

Note that the computationally most expensive task in the employed splitting 
approach is the advection step. Therefore, we order the steps in \eqref{eq:strang} in such a way that the advection equation is solved only once in each time step. In this way, Strang splitting provides much more accuracy without being significantly more expensive than Lie splitting.

Next, we compare the three different numerical realizations of the advection step, namely by local interpolation, by direct Fourier series evaluation (DFT) and by NFFT. In Table~\ref{tab:massconservation1d} we report the error in mass conservation and the required CPU time for various values of $N$. The number of time steps is fixed to $n=128$. The error in mass conservation is measured as the maximum deviation from the initial mass on the discrete level ($l^2$ in space and $l^\infty$ in time).

\begin{table}[!ht]
\centering
\renewcommand{\arraystretch}{1.2}
\begin{tabular}{|*{8}{c|}}
\hline
 & & \multicolumn{4}{c|}{interpolation} & \multicolumn{2}{c|}{Fourier}\\
\cline{3-8}
$N$ & & $p=2$ & $p=4$ & $p=6$ & $p=8$ & DFT & NFFT\\
\hline
128 & mass & 1.4e-01 & 1.8e-02 & 2.1e-03 & 2.8e-04 & 2.8e-15 & 8.6e-14 \\
\cline{2-8}
 & CPU & 0.13 & 0.12 & 0.12 & 0.12 & 0.10 & 0.16 \\
\hline
256 & mass & 9.4e-02 & 2.7e-03 & 7.2e-05 & 2.5e-06 & 2.0e-15 & 1.0e-14 \\
\cline{2-8}
 & CPU & 0.13 & 0.13 & 0.13 & 0.14 & 0.19 & 0.17 \\
\hline
512 & mass & 5.2e-02 & 2.9e-04 & 2.0e-06 & 1.8e-08 & 3.6e-15 & 1.7e-14 \\
\cline{2-8}
 & CPU & 0.16 & 0.19 & 0.17 & 0.16 & 0.27 & 0.19 \\
\hline
1024 & mass & 1.6e-02 & 1.8e-05 & 3.0e-08 & 9.6e-11 & 4.0e-15 & 5.5e-14 \\
\cline{2-8}
 & CPU & 0.22 & 0.23 & 0.23 & 0.24 & 0.56 & 0.23 \\
\hline
2048 & mass & 4.2e-03 & 1.1e-06 & 4.9e-10 & 3.8e-12 & 3.3e-15 & 1.3e-14 \\
\cline{2-8}
 & CPU & 0.36 & 0.37 & 0.37 & 0.37 & 1.42 & 0.33 \\
\hline
\end{tabular}
\caption{Error in mass conservation and CPU time (in seconds) for the
one-dimensional numerical example.}
\label{tab:massconservation1d}
\end{table}
Due to the compressive behavior of $S'_0(x)$, which acts as an initial velocity, the evolution develops caustics and the numerical solution requires a sufficiently large number $N$ of Fourier modes in order to reproduce accurate physical observables. While DFT and NFFT always preserve the mass almost up to machine precision, the polynomial methods become comparable only with the largest tested value of $N$ and at polynomial degree 7. For this degree, they are slightly more expensive than the NFFT approach.

The second numerical experiment is set in the two-dimensional domain
$[-5,5]^2$ with
\begin{equation*}
\begin{aligned}
A_1(x,y)&=-3\sin\left(\tfrac{2\pi(y+5)}{10}\right),\\
A_2(x,y)&=3\sin\left(\tfrac{2\pi(x+5)}{10}\right),\\
V(x,y)&= 20\cos\left(\tfrac{2\pi(x+5)}{10}\right)+ 20\cos\left(\tfrac{2\pi(y+5)}{10}\right)+40,
\end{aligned}
\end{equation*}
and initial value
\begin{equation*}
u_0(x,y)=\sqrt{\tfrac{\sqrt{10}}{\pi}}\exp\left(-\tfrac{\sqrt{10}}{2}\left((x-1)^2+y^2\right)\right).
\end{equation*}
The semi-classical parameter is chosen $\varepsilon=1$, the final time $T=50$ and the number of time steps $n=1000$. In Table~\ref{tab:massconservation2d} we compare the three methods that only differ in the treatment of the advection step. In particular, we compare the behavior of tensor interpolation at $4\times 4$ and $6\times 6$ points with direct Fourier series evaluation and NFFT with the default value $m=8$ and the smaller values $m=6$ and $m=4$, respectively.

\begin{table}[!ht]
\centering
\renewcommand{\arraystretch}{1.2}
\begin{tabular}{|*{8}{c|}}
\hline
& & \multicolumn{2}{c|}{} & \multicolumn{4}{c|}{Fourier}\\
\cline{5-8}
& & \multicolumn{2}{c|}{interpolation} & & \multicolumn{3}{c|}{NFFT}\\
\cline{3-4}\cline{6-8}
$N_1=N_2$ & & $p=4$ & $p=6$ & DFT & $m=8$ & $m=6$ & $m=4$\\
\hline
128 & mass & 1.0e-01 & 2.5e-03 & 9.9e-11 & 1.0e-10 & 2.4e-10 & 2.2e-07\\
\cline{2-8}
& CPU & 25.2 & 33.5 & 174.3 & 23.7 &  22.8 & 20.6\\
\hline
256 & mass & 6.9e-03  & 3.9e-05 & 1.3e-08 & 1.3e-08 & 2.0e-08  & 2.5e-02\\
\cline{2-8}
& CPU & 101.7 & 117.9 & 2254 & 99.6  & 85.6 & 87.7\\
\hline
512 & mass & 4.3e-04  &  6.2e-07 & * & 9.7e-11  & 2.5e-10 & 2.0e-07\\
\cline{2-8}
& CPU &  412.7 & 506.8 & * & 435.7 & 401.4 & 400.4\\
\hline
1024 & mass & 2.7e-05  &  9.6e-09 & * & 9.7e-11  & 2.5e-10 & 1.9e-07\\
\cline{2-8}
& CPU &  1796 & 2139 & * & 1948 & 1840 & 1709\\
\hline
\end{tabular}
\caption{Error in mass conservation and CPU time (in seconds) for the two-dimensional numerical example.}
\label{tab:massconservation2d}
\end{table}
We observe that, for this long-term simulation, the mass is always well conserved by the direct Fourier series evaluation and by NFFT with the default value $m=8$. On the other hand, if
$m$ is halved, there is a significant degradation, especially with $N_1=N_2=256$. The direct Fourier series evaluation is much more expensive than the other methods, being impracticable for $N_1=N_2\ge 512$. The interpolation methods roughly cost as much as the NFFT approach, but their mass preservation is by far worse.

The final numerical example is a three-dimensional variation of the previous one. In the domain $[-5,5]^3$, with $\varepsilon=1$, we chose
\begin{equation*}
\begin{aligned}
A_1(x,y,z)&=\sin\left(\tfrac{2\pi(y+5)}{10}\right)+ \sin\left(\tfrac{2\pi(z+5)}{10}\right)\\
A_2(x,y,z)&=\sin\left(\tfrac{2\pi(x+5)}{10}\right)+ \sin\left(\tfrac{2\pi(z+5)}{10}\right)\\
A_3(x,y,z)&=\sin\left(\tfrac{2\pi(x+5)}{10}\right)+ \sin\left(\tfrac{2\pi(y+5)}{10}\right)\\
V(x,y,z)&= 20\cos\left(\tfrac{2\pi(x+5)}{10}\right)+ 20\cos\left(\tfrac{2\pi(y+5)}{10}\right)+20\cos\left(\tfrac{2\pi(z+5)}{10}\right)+ 60,
\end{aligned}
\end{equation*}
and the initial value
\begin{equation*}
u_0(x,y,z)=\tfrac{2^{3/8}}{\pi^{3/2}}\exp\left(-\tfrac{\sqrt{2}}{2}\left((x-1)^2+y^2+z^2\right)\right).
\end{equation*}
\begin{table}[!ht]
\centering
\renewcommand{\arraystretch}{1.2}
\begin{tabular}{|c|c|c|c|}
\hline
 & & \multicolumn{2}{c|}{NFFT}\\
\cline{3-4}
$N_1=N_2=N_3$ & & \verb+PRE_PSI+ & \verb+PRE_FULL_PSI+\\
\hline
16 & mass & 6.1e-13 & 6.1e-13\\
\cline{2-4}
   & CPU & 5.6 & 6.5\\
\hline
32 & mass & 8.2e-14 & 8.2e-14\\
\cline{2-4}
   & CPU & 37.7 & 51.7\\
\hline
64 & mass & 7.1e-13 & *\\
\cline{2-4}
   & CPU & 396.5 & *\\
\hline
128 & mass & 7.9e-09 & *\\
\cline{2-4}
   & CPU & 2976 & *\\
\hline
\end{tabular}
\caption{Error in mass conservation and CPU time (in seconds) for the three-dimensional example.}
\label{tab:massconservation3d}
\end{table}%
With this example, we also tested the option \verb+PRE_FULL_PSI+ of NFFT (see~\cite{KKP09}). At the price of a full precomputation of the window functions, which requires a storage of $(2m+1)^3\prod_i N_i$ double precision numbers, this option should allow an overall faster execution. In Table~\ref{tab:massconservation3d} we display the error of mass conservation and the CPU time for simulations up to $T=5$ with 100 time steps. As expected, there is no difference in the mass conservation property between the two schemes. However, we never succeeded in getting the \verb+PRE_FULL_PSI+ version faster than the default one (named \verb+PRE_PSI+). For $N_1=N_2=N_3\ge64$, it was even not possible to store the precomputed values in the RAM (8 GB). Nevertheless, the default implementation of NFFT, which requires a storage of $3(2m+1)\prod_i N_i$ for the window
functions, works without any problem.

\section{Conclusions}
In this paper we considered the numerical solution of the linear Schr\"odinger equation with a vector potential. The structure of the problem suggested to use a splitting method involving three different parts, namely a multiplicative term coming from scalar potentials, the Laplacian, and the advective term due to the vector potential. After establishing convergence of Lie splitting for an abstract problem, we analysed the required assumptions in the specific case of the magnetic Schr\"odinger equation. For the advection step, the solution along the characteristic curves was approximated by a nonequispaced fast Fourier transform. It turned out to be as fast as local polynomial interpolation and as accurate as direct Fourier series evaluation in the mass conservation at discrete level. Therefore, it can be considered as a competitive tool in the solution of advection equations with the method of characteristics.

\bibliographystyle{plain}

\begin{thebibliography}{10}

\bibitem{BJM02}
W.~Bao, S.~Jin, and P.~Markowich.
\newblock On time-splitting spectral approximations for the {S}chr\"odinger
  equation in the semiclassical regime.
\newblock {\em J. Comput. Phys.}, 175(2):487--524, 2002.

\bibitem{EO14}
L.~Einkemmer and A.~Ostermann.
\newblock Convergence analysis of {S}trang splitting for {V}lasov-type
  equations.
\newblock {\em SIAM J. Numer. Anal.}, 52(1):140--155, 2014.

\bibitem{EO15}
L.~Einkemmer and A.~Ostermann.
\newblock A splitting approach for the {K}adomtsev--{P}etviashvili equation.
\newblock {\em J. Comput. Phys.}, 299:716--730, 2015.

\bibitem{F12}
E.~Faou.
\newblock {\em Geometric Numerical Integration and Schr\"odinger Equations}.
\newblock European Mathematical Society (EMS), Z\"urich, 2012.

\bibitem{HLW00}
E.~Hairer, C.~Lubich, and G.~Wanner.
\newblock {\em Geometric Numerical Integration. Structure-Preserving Algorithms
  for Ordinary Differential Equations}.
\newblock Springer, Berlin, second edition, 2006.

\bibitem{HO09}
E.~Hansen and A.~Ostermann.
\newblock Exponential splitting for unbounded operators.
\newblock {\em Math. Comp.}, 78(267):1485--1496, 2009.

\bibitem{JL00}
T.~Jahnke and C.~Lubich.
\newblock Error bounds for exponential operator splittings.
\newblock {\em BIT}, 40(4):735--744, 2000.

\bibitem{JMS11}
S.~Jin, P.~Markowich, and C.~Sparber.
\newblock Mathematical and computational methods for semiclassical
  {S}chr\"odinger equations.
\newblock {\em Acta Numer.}, 20:121--209, 2011.

\bibitem{JZ13}
S.~Jin and Z.~Zhou.
\newblock A semi-{L}agrangian time splitting method for the {S}chr\"odinger
  equation with vector potentials.
\newblock {\em Commun. Inf. Syst.}, 13(3):247--289, 2013.

\bibitem{KKP09}
J.~Keiner, S.~Kunis, and D.~Potts.
\newblock Using {NFFT 3}---{A} software library for various nonequispaced fast
  {F}ourier transforms.
\newblock {\em ACM Trans. Math. Software}, 36(4):19:1--19:30, 2009.

\bibitem{L08}
C.~Lubich.
\newblock {\em From Quantum to Classical Molecular Dynamics: Reduced Models and
  Numerical Analysis}.
\newblock European Mathematical Society (EMS), Z\"urich, 2008.

\bibitem{MQ02}
R.I. McLachlan and G.R.W. Quispel.
\newblock Splitting methods.
\newblock {\em Acta Numer.}, 11:341--434, 2002.

\bibitem{SRBG99}
E.~Sonnendr\"ucker, J.~Roche, P.~Bertrand, and A.~Ghizzo.
\newblock The semi-{L}agrangian method for the numerical resolution of the
  {V}lasov equation.
\newblock {\em J. Comput. Phys.}, 149:201--220, 1999.

\bibitem{S68}
G.~Strang.
\newblock On the construction and comparison of difference schemes.
\newblock {\em SIAM J. Numer. Anal.}, 5(3):506--517, 1968.

\end{thebibliography}
\section*{\refname}

\end{document}